\DeclareMathOperator{\Hom}{Hom}
\newtheorem{theorem}{Theorem}[section]
\newtheorem{lemma}[theorem]{Lemma}
\newtheorem{proposition}[theorem]{Proposition}
\newtheorem{corollary}[theorem]{Corollary}
\theoremstyle{remark}
\newtheorem{remark}{Remark}[section]
\theoremstyle{definition}
\newtheorem{definition}{Definition}
\newtheorem{example}{Example}
\numberwithin{equation}{section}
\title[Brenner's theorem for Hochschild extension algebras]
	{An application of a theorem of Sheila Brenner for Hochschild extension algebras of a truncated quiver algebra}
\author[H. Koie]{Hideyuki Koie}
\address[H. Koie]{Department of Mathematics,
					Tokyo University of Science, 1-3 Kagurazaka,
                    	Shinjuku-ku, Tokyo 162--8601, Japan}
\email{1114702@ed.tus.ac.jp}
\keywords{Hochschild extension, Hochschild (co)homology, trivial extension,
   self-injective algebra, almost split sequence, quiver.}
\subjclass[2010]{
	16E40, 
	16G20, 
	16G70. 
}
\begin{document}
\maketitle
\begin{abstract}
Let $A$ be a truncated quiver algebra over an algebraically closed field
such that any oriented cycle in the ordinary quiver of $A$ is zero in $A$.
We give the number of the indecomposable direct summands of
the middle term of an almost split sequence for a class of Hochschild extension
algebras of $A$ by the standard duality module $D(A)$.
\end{abstract}
%
%
\section{Introduction}
Brenner \cite{Brenner} studied the number of indecomposable direct summands of
the middle term of an almost split sequence and
she showed how to determine this number for an artin algebra. 
However, it is not easy to compute this number by original method.
Fern$\acute{{\rm a}}$ndez-Platzeck \cite{FP} obtained further information for
trivial extension algebras.
Similarly,
we will give an application of a theorem of Sheila Brenner for Hochschild extension algebras
which is a generalization of trivial extension algebras.

Let $K$ be an algebraically closed field and $A=K\Delta_A/I$ 
a bound quiver algebra, where $\Delta_A$ is a finite connected quiver
and the ideal $I$ is admissible.
We denote by $D(A)$ the standard duality module $\Hom_K(A,\,K)$.
We recall the definitions of a Hochschild extension
and a Hochschild extension algebra from \cite{Hochschild}, \cite{Koie} and \cite{Handbook of algebra}.
By a Hochschild extension over $A$ by $D(A)$, 
we mean an exact sequence
$$
	0 \longrightarrow
    	D(A) \stackrel{\kappa}{\longrightarrow}
        	T \stackrel{\rho}{\longrightarrow}
            	A \longrightarrow
                	0
$$
such that $T$ is a $K$-algebra, $\rho$ is an algebra epimorphism
and $\kappa$ is a $T$-bimodule monomorphism.
The algebra $T$ is called a Hochschild extension algebra.
It is well known that $T$ is isomorphic to $A \oplus D(A)$ with the multiplication
$$
  (a,\,f)(b,\,g) = (ab,\, ag + fb + \alpha(a,\,b)),
$$
where $\alpha: A \times A \longrightarrow D(A)$ is a $2$-cocycle.
We denote by $T_\alpha(A)$ the Hochschild extension algebra corresponding to
a $2$-cocycle $\alpha$.
Then, $T_0(A)$ is just the trivial extension algebra
$A \ltimes D(A)$.

For a hereditary algebra $A$,
Yamagata \cite{yamagata 1981} studied the Auslander-Reiten quivers of Hochschild extension algebras.
In particular,
for a Hochschild extension algebra $T$,
he showed that
the number of isomorphism classes of indecomposable modules in ${\rm mod}\,T$
is twice the one for ${\rm mod}\,A$.
However this does not hold for a Hochschild extension algebra for a general algebra.
So we are interested in an almost split sequence in the module category of Hochschild extension algebra.
 
In \cite{Brenner}, Brenner showed
how to determine the number of indecomposable
direct summands of the middle term of an almost split sequence
starting with a simple module. As a consequence of this result,
for a self-injective artin algebra, she obtained the number of indecomposable direct
summands of ${\rm rad}\,P/{\rm soc}\,P$, where $P$ is an indecomposable
projective module.
These results by Brenner
play an important role in the representation theory of algebras.
However,   
in general, it is not easy to compute these numbers for a given algebra.
So there is few works to compute these numbers.
In \cite{FP}, Fern$\acute{{\rm a}}$ndez and Platzeck gave a
simple interpretation of them in the particular case of the trivial extension
$T_0(A)$.
This is done by focusing on the number of nonzero cycles in $\Delta_{T_0(A)}$.
Fern$\acute{{\rm a}}$ndez and Platzeck proved that the set of nonzero cycles coincides with
the set of elementary cycles.
Using this fact, they gave the numbers considered by Brenner by computing
the cardinality of the equivalent classes of the set of nonzero cycles.

In this paper, for a truncated quiver algebra $A$ such that any
oriented cycle is zero in $A$,
we give a similar interpretation of the numbers considered by Brenner
for a Hochschild extension
algebra $T_\alpha(A)$ such that $\Delta_{T_\alpha(A)} = \Delta_{T_0(A)}$ holds.
Unfortunately, for a Hochschild extension algebra,
the set of nonzero cycles does not coincide with
the set of elementary cycles in general.
So by defining an {\it $\alpha$-revived cycle},
we will prove that
a nonzero cycle in $T_\alpha(A)$ is
either an elementary cycle or an $\alpha$-revived cycle.
So we enumerate these nonzero cycles and then
we can give the numbers considered by Brenner easily.

This paper is organized as follows:
In Section $2$, we recall results of Brenner, some definitions
and facts about Hochschild extension algebras.
In particular, some facts about $2$-cocycles and the ordinary
quiver of Hochschild extension algebras are based on
\cite{Koie} and \cite{Koie2}.
In Section $3$, we will give a characterization of nonzero oriented cycles in
a Hochschild extension algebra.
We will show that a nonzero oriented cycle in $T_\alpha(A)$ is either
an elementary cycle or an $\alpha$-revived cycle.
In  Section $4$, we give the number of indecomposable direct summands of
the middle term of almost split sequence for $T_\alpha(A)$.
We define a relation in the set of all nonzero cycles with an origin $h$ in $T_\alpha(A)$.
And we give the number by computing the cardinality of the set of
equivalence classes. 
Moreover,
for a class of Hochschild extension algebra,
we will show that
if the origin $h$ is neither sink nor source then
the number is one. 

For general facts on quivers and bound quiver algebras, we refer to \cite{ASS} and \cite{Fro}.
Also for Hochschild extension algebra,
we refer to \cite{Koie}, \cite{Koie2}, \cite{yamagata 1981} and \cite{Handbook of algebra}.
Moreover, the notation including $\Delta_0$, $\Delta_1$, $\Delta_+$ and the
isomorphism $\Theta : \bigoplus_{q} D(HH_{2,\,q}(A)) \xrightarrow{\sim} {H^2(A,\,D(A))}$
is same as in \cite{Koie}.
%
%
%
%
%
\section{Preliminaries}
In this section, we recall theorems of Brenner \cite{Brenner},
the ordinary quiver of a Hochschild extension algebra along \cite{Koie}
and the definition of an elementary cycle in the ordinary quiver
of a Hochschild extension algebra introduced in \cite{FP}.
After that we define an $\alpha$-revived cycle
for a $2$-cocycle $ \alpha$ and we give an example of these cycles.

\subsection{A theorem of Brenner}

We recall Brenner's results along her paper \cite{Brenner}. 
Let $A$ be an artin algebra.
An element of $A$ of the form $a = xay$
will be called an arrow,
where $x$ and $y$ are primitive idempotents of
$A$ and $a \in {\rm rad}\,A \backslash {\rm rad}^2A$.
Let $e$ be a primitive idempotent of $A$. A set $\Lambda$ of arrows will be called a
complete set of arrows for ${\rm rad}\,Ae$ if
\begin{itemize}
 \item it generates ${\rm rad}\,Ae$ as a left $A$-module,
 \item no proper subset of $\Lambda$ generates ${\rm rad}\,Ae$.
\end{itemize}
A complete set  of arrows for ${\rm rad}\,eA$ is defined similarly.

Let $e$ be a primitive idempotent. We denote by $\mathscr{N}$ the set of
pair $(N,\,n)$ of integers such that there exist sets of arrows
$\Lambda_i$ and $\Gamma_i$, $0 \leq i \leq n$, of which only
$\Lambda_0$ and $\Gamma_0$ can be empty, satisfying the following conditions:
\begin{enumerate}
	\item $i \neq j$ 
			implies 
			$\Lambda_i \cap \Lambda_j = \emptyset = \Gamma_i \cap \Gamma_j$,
	\item $\bigcup_{i=0}^n \Lambda_i$ is a complete set of arrows for ${\rm rad}\,Ae$,
	\item $\bigcup_{i=0}^n \Gamma_i$ is a complete set of arrows for ${\rm rad}\,eA$,
	\item If $i \neq j$, or $i=0$, or $j=0$, then
			$a \in \Lambda_i$ and $b \in \Gamma_j$ implies $ab = 0$,
	\item $N = n + {\rm card}\,\Lambda_0$.
\end{enumerate}
Let $N_e = {\rm max}\{N \mid \text{there exists $n$ such that $(N, n) \in \mathscr{N}$} \}$
and
$n_e = {\rm min} \{n \mid (N_e, n) $ $\in \mathscr{N} \}$.

\begin{theorem}[\cite{Brenner}]\label{Brenner1}
	Let $S$ be a noninjective simple $A$-module, and
	let $e$ be a primitive idempotent of $A$ such that $S \cong eA/{\rm rad}\,eA$.
	The middle term of the almost split sequence starting at $S$ has exactly $N_e$
	indecomposable direct summands.
	Furthermore, the number of indecomposable projective direct summands
	is equal to $N_e- n_e$.
\end{theorem}

\begin{corollary}[\cite{Brenner}]
	Let $e$ be same as in Theorem \ref{Brenner1} and $P= eA$.
	If $A$ is self-injective, then the number of indecomposable direct summands
	of  ${\rm rad}\,P / {\rm soc}\,P$ is equal to $n_e$.
\end{corollary}

\subsection{A $2$-cocycle induced by a cycle in the ordinary quiver}\label{2-cocycle}
From now on,
let $K$ be an algebraically closed field, $\Delta$ a quiver and $A := K\Delta/R_\Delta^n\, (n\geq2)$ a
truncated quiver algebra such that any oriented cycle in $\Delta$ is zero in $A$. 
We assume that ${\rm dim}\,A > 1$.

Since $A$ is a truncated quiver algebra,
we can take a set 
$\mathbb{M} := \{p_i \mid i= 1,\ldots,t \}$ of paths in $\Delta$ such that
$\{ \overline{p_i} \mid i= 1,\ldots,t \}$ is a basis of ${\rm soc}_{A^e}\,A$. 
Moreover,
let $\{ \overline{p_1}, \ldots, \overline{p_t}, \ldots, \overline{p_d} \}$ be a basis of $A$
by taking paths $p_{t+1}, \ldots, p_d$ in $\Delta$.
We denote by $\{ \overline{p_1}^*, \ldots, \overline{p_t}^*, \ldots, \overline{p_d}^* \}$
the dual basis in $D(A)$.
We note that,
by \cite[Proposition 2.2.]{FP},
the ordinary quiver 
$\Delta_{T_0(A)}$ is given by
\begin{itemize}
 \item $(\Delta_{T_0(A)})_0 = \Delta_0$,
 \item $(\Delta_{T_0(A)})_1 = \Delta_1 \cup \{y_{p_1} ,\ldots, y_{p_t} \}$,
\end{itemize}
where, for each $i$, $y_{p_i}$ is an arrow from $t(p_i)$ to $s(p_i)$.

Next,  
under the notation of \cite{Koie} and \cite{Koie2},
we will define a $2$-cocycle $\alpha$.
For $n+1 \leq s \leq 2n-2$,
let $\gamma = x_1 x_2 \cdots x_s \in \Delta_{s}^c$ be a cycle. 
Then it is easy to check that $\gamma$ is a basic cycle.
We regard the subscripts $i$ of $x_i$ modulo $s\, (1 \leq i \leq s)$.
Moreover,
$((A \otimes_{A^e} \textrm{\boldmath{$P$}}_*)_s ,\, (\tilde{d}_*)_s)$ is
$\Delta_s^{c}/C_s$-graded
and
$
	\{ v_i =  x_{i+n} \cdots x_{i+s-1} \otimes_{K\Delta_0^e} x_i x_{i+1} \cdots x_{i+n-1}
		 \mid
			1 \leq i \leq s \}
$
is a basis of $((A \otimes_{K\Delta_0^e}K\Delta_n)_s)_{\overline{\gamma}}$.
We denote by $\{ v_i^* \mid 1 \leq i \leq s \}$ the dual basis in 
$D(((A \otimes _{K\Delta_0^e} K\Delta_n)_s)_{\overline{\gamma}})$.
Then
we have the following complex
\begin{align*}
D(((A \otimes_{K\Delta_0^e}K\Delta_1)_s)_{\overline{\gamma}})  
  \stackrel{0}\longrightarrow
	&D(((A \otimes_{K\Delta_0^e}K\Delta_n)_s)_{\overline{\gamma}}) \\
	 & \xrightarrow{D(((\tilde{d_3})_{s})_{\overline{\gamma}})}
		D(((A \otimes_{K\Delta_0^e}K\Delta_{n+1})_s)_{\overline{\gamma}}),
\end{align*}
and we have the following isomorphism
\begin{align*}
	D(HH_{2,\,s,\,{\overline{\gamma}} }(A))
    	\cong  
        	 {\rm Ker}\,(D(((\tilde{d_3})_{s})_{\overline{\gamma}}))
            = \langle v_1^* + \cdots + v_s^* \rangle. \label{case1}
\end{align*}
We denote the map
$\Theta(v_i^*) : A \times A \longrightarrow D(A)$
by $\alpha_i$
for $i = 1, 2, \ldots, s$.
Then each $\alpha_i$ is the map as follows:
\begin{align*}
	\alpha_i(\overline{a} ,\, \overline{b}) 
     	& =
            	\begin{cases}
                	\overline{x_{i+m} \cdots x_{i+s-1}}^* 
                    	& \text{if $\overline{a},\, \overline{b} \neq 0$ in $A$,
                    	  $\: n \leq m < s$} \\
                    	&\quad \quad\quad \quad\text{and $ab = x_i\cdots x_{i+m-1}$,} \\
                    \overline{s(x_i)}^* 
                       & \text{if $\overline{a},\, \overline{b} \neq 0$ in $A$
                       and  $ab = x_i\cdots x_{i+s-1}$},
                       \\
                    0 
                    	&{\rm otherwise},\\
                \end{cases} 
\end{align*}
where $a, \,b$ are paths in $\Delta$,
$m$ denotes the length of $ab$.
Moreover, 
$\sum_{i=1}^{s}\alpha_i$ is a $2$-cocycle and
the cohomology class $[\sum_{i=1}^{s}\alpha_i]$ is a basis of $D(HH_{2,\,s,\,{\overline{\gamma}} }(A))$.
We fix a nonzero element $k ({\neq}\,0) \in K$ and let $\alpha = k\sum_{i=1}^s \alpha_i$. 
Then we have the following proposition.
\begin{proposition}
	The ordinary quiver of $T_\alpha(A)$ coincides with $\Delta_{T_0(A)}$.
\end{proposition}
\begin{proof}
	We can prove this proposition by a similar way to \cite[Theorem 4.3]{Koie}.
\end{proof}

\subsection{Elementary cycles and $\alpha$-revived cycles}
Let $\alpha = k\sum_{i=1}^s \alpha_i$ be the $2$-cocycle defined in 
Section \ref{2-cocycle}.
We define an elementary  cycle and its weight for $T_{\alpha}(A)$ based on
\cite[Definition 3.1]{FP}. 
	Let $C$ be an oriented cycle in $\Delta_{T_\alpha(A)}$.
	We say that $C$ is {\it elementary}
	if $C =  \delta_2 y_{p_i} \delta_1$
	for some paths $\delta_1$ and $\delta_2$ in $K\Delta$
	and $p_i \in \mathbb{M}$
	such that $\overline{p_i}^*(\overline{\delta_1\delta_2}) \neq 0$.
	Now let $C = a_{1} \cdots a_{j}$ be an oriented cycle in $\Delta_{T_\alpha(A)}$ where
	$a_1,\ldots, a_j \in \Delta_1$.
    We say that $C$ is {\it $\alpha$-revived}
	if 
    there exist $a,\,b \in \Delta_+$ such that
    $\overline{a},\,\overline{b} \neq 0$ in $A$,
    $C =a_1\cdots a_j = ab$
    and $\alpha(\overline{a},\,\overline{b}) \neq 0$. 
Then,
under the notation above,
it is easy to see that
$j = s$, $C = x_i\cdots x_{i+s-1}$ for some $i$
and $\alpha(\overline{a},\,\overline{b})(1_A) = k$,
where $k$ is the fixed element in the above.
Moreover, we define a {\it weight} $w(C)$ of an elementary cycle $C = \delta_2 y_{p_i} \delta_1$
by
$
				\overline{p_i}^*(\overline{\delta_1\delta_2})
$,
and we also define a {\it weight} $w(C)$ of an $\alpha$-revived cycle $C$
by $k$.

We say that a path $q$ is {\it contained}
in a path $q'$,
if $q' = \gamma_1 q \gamma_2$,
where $\gamma_1$, $\gamma_2$ are paths with
$t(\gamma_1) = s(q)$ and $s(\gamma_2) = t(q)$.

\begin{remark}[{cf. \cite[Remark 3.3]{FP}}]\label{Remark3.3}
	If $0 \neq \overline{v} \in A$, then there are paths $\delta_1,\delta_2$ in $K\Delta$
	and $p_j \in \mathbb{M}$ such that
	$\overline{p_j}^* (\overline{\delta_1 v \delta_2}) \neq 0$,
	and in particular, any nonzero path in $A$ is contained in an elementary cycle.
\end{remark}

\begin{remark}\label{Remark2}
	If $C = a_1 \cdots a_m$ with $a_1, \ldots, a_m \in (\Delta_{T_\alpha(A)})_1$
	is an elementary cycle,
	then $a_2a_3\cdots a_m a_1$ is also an elementary cycle. 
\end{remark}

\begin{remark}\label{Remark3}
	If $C = a_1 \cdots a_j$ with $a_1, \ldots, a_j \in \Delta_1$
	is an $\alpha$-revived cycle,
	then $a_2a_3\cdots a_j a_1$ is also an $\alpha$-revived cycle.
\end{remark}

\begin{definition}[{cf. \cite[Definition 3.4]{FP}}]
	Let $q$ be a path contained in an elementary cycle $C$ of
    length less than or equal to the length of $C$.
   The  {\it supplement} of $q$ in $C$ is defined as follows:
$$	
\begin{cases}
	\text{the trivial path $e_{s(q)}$}  
		&\text{if $s(q)=t(q)$,} \\
	\text{the path formed by the remaining
	arrows of $C$} 
		& \text{if $s(q) \neq t(q)$.}
\end{cases}
$$
\end{definition}

\subsection{Example}
We illustrate an example of the ordinary quiver of
a Hochschild extension algebra and we give some
examples of elementary cycles and $\alpha$-revived cycles.
\begin{example}\label{example1}
Let $\Delta$ be the following quiver:
$$
{\unitlength 0.1in%
\begin{picture}(20.8100,12.1900)(18.1000,-17.2300)%
%
\special{pn 8}%
\special{pa 1911 685}%
\special{pa 2719 685}%
\special{fp}%
\special{sh 1}%
\special{pa 2719 685}%
\special{pa 2652 665}%
\special{pa 2666 685}%
\special{pa 2652 705}%
\special{pa 2719 685}%
\special{fp}%
%
\special{pn 8}%
\special{pa 2719 1640}%
\special{pa 1845 766}%
\special{fp}%
\special{sh 1}%
\special{pa 1845 766}%
\special{pa 1878 827}%
\special{pa 1883 804}%
\special{pa 1906 799}%
\special{pa 1845 766}%
\special{fp}%
%
\special{pn 8}%
\special{pa 3831 1594}%
\special{pa 3831 786}%
\special{fp}%
\special{sh 1}%
\special{pa 3831 786}%
\special{pa 3811 853}%
\special{pa 3831 839}%
\special{pa 3851 853}%
\special{pa 3831 786}%
\special{fp}%
%
\special{pn 8}%
\special{pa 2922 1696}%
\special{pa 3730 1696}%
\special{fp}%
\special{sh 1}%
\special{pa 3730 1696}%
\special{pa 3663 1676}%
\special{pa 3677 1696}%
\special{pa 3663 1716}%
\special{pa 3730 1696}%
\special{fp}%
%
\special{pn 8}%
\special{pa 2780 786}%
\special{pa 2780 1594}%
\special{fp}%
\special{sh 1}%
\special{pa 2780 1594}%
\special{pa 2800 1527}%
\special{pa 2780 1541}%
\special{pa 2760 1527}%
\special{pa 2780 1594}%
\special{fp}%
%
\special{pn 8}%
\special{pa 3730 685}%
\special{pa 2922 685}%
\special{fp}%
\special{sh 1}%
\special{pa 2922 685}%
\special{pa 2989 705}%
\special{pa 2975 685}%
\special{pa 2989 665}%
\special{pa 2922 685}%
\special{fp}%
%
\special{pn 8}%
\special{pa 2861 786}%
\special{pa 2861 1594}%
\special{fp}%
\special{sh 1}%
\special{pa 2861 1594}%
\special{pa 2881 1527}%
\special{pa 2861 1541}%
\special{pa 2841 1527}%
\special{pa 2861 1594}%
\special{fp}%
\put(32.6500,-6.3400){\makebox(0,0)[lb]{$x_4$}}%
\put(38.9100,-11.9000){\makebox(0,0)[lb]{$x_3$}}%
\put(32.6500,-17.9700){\makebox(0,0)[lb]{$x_2$}}%
\put(29.2200,-11.9000){\makebox(0,0)[lb]{$x_1$}}%
\put(26.5900,-11.9000){\makebox(0,0)[lb]{$z_1$}}%
\put(21.5400,-12.9200){\makebox(0,0)[lb]{$z_2$}}%
\put(22.5500,-6.3400){\makebox(0,0)[lb]{$z_3$}}%
\put(28.0000,-7.1000){\makebox(0,0)[lb]{$1$}}%
\put(28.0000,-16.9600){\makebox(0,0)[lb]{$2$}}%
\put(18.1000,-6.8500){\makebox(0,0)[lb]{$5$}}%
\put(37.9500,-7.1600){\makebox(0,0)[lb]{$4$}}%
\put(38.0000,-17.2600){\makebox(0,0)[lb]{$3$}}%
\end{picture}}%

$$
and $A= K\Delta/R_\Delta^3$.
Let $\mathbb{M}$ be the set of paths of length $2$.
Then
$\mathbb{M}$ forms a $K$-basis of ${\rm soc}_{A^e}A$
of ${\rm dim}_K \,{\rm soc}_{A^e}A = 11$.
We put $\gamma = x_1x_2x_3x_4$.
For each $i$ $(1 \leq i \leq 4)$,
$\alpha_i : A \times A \longrightarrow D(A)$
corresponding to $\gamma$
is the map as follows:
$$
	\alpha_i(\overline{a},\,\overline{b}) 
    	= \begin{cases}
    		\overline{x_{i+3}}^*  & \text{if $\overline{a}, \overline{b} \neq 0$ in $A$ and $ab = x_ix_{i+1}x_{i+2}$}, \\
            \overline{e_{i+4}}^*  & \text{if $\overline{a}, \overline{b} \neq 0$ in $A$ and $ab = x_ix_{i+1}x_{i+2}x_{i+3}$}, \\
            0 & \text{otherwise}.
    	\end{cases}
$$
Let $k(\neq 0) \in K$ and $\alpha = k \sum_{i=1}^4 \alpha_i$.
Then, the ordinary quiver of $T_\alpha(A)$ coincides with $\Delta_{T_0(A)}$,
and $\Delta_{T_0(A)}$ is the following quiver:

$$
{\unitlength 0.1in%
\begin{picture}(42.1500,24.5500)(17.4000,-29.6500)%
%
\special{pn 8}%
\special{pa 2153 1061}%
\special{pa 3574 1061}%
\special{fp}%
\special{sh 1}%
\special{pa 3574 1061}%
\special{pa 3507 1041}%
\special{pa 3521 1061}%
\special{pa 3507 1081}%
\special{pa 3574 1061}%
\special{fp}%
%
\special{pn 8}%
\special{pa 5836 2690}%
\special{pa 5836 1080}%
\special{fp}%
\special{sh 1}%
\special{pa 5836 1080}%
\special{pa 5816 1147}%
\special{pa 5836 1133}%
\special{pa 5856 1147}%
\special{pa 5836 1080}%
\special{fp}%
%
\special{pn 8}%
\special{pa 4040 2830}%
\special{pa 5651 2830}%
\special{fp}%
\special{sh 1}%
\special{pa 5651 2830}%
\special{pa 5584 2810}%
\special{pa 5598 2830}%
\special{pa 5584 2850}%
\special{pa 5651 2830}%
\special{fp}%
%
\special{pn 8}%
\special{pa 5634 878}%
\special{pa 4023 878}%
\special{fp}%
\special{sh 1}%
\special{pa 4023 878}%
\special{pa 4090 898}%
\special{pa 4076 878}%
\special{pa 4090 858}%
\special{pa 4023 878}%
\special{fp}%
\put(47.0600,-7.7600){\makebox(0,0)[lb]{$x_4$}}%
\put(59.5500,-18.8400){\makebox(0,0)[lb]{$x_3$}}%
\put(47.0600,-30.9500){\makebox(0,0)[lb]{$x_2$}}%
\put(40.2200,-19.6400){\makebox(0,0)[lb]{$x_1$}}%
\put(38.3500,-19.6400){\makebox(0,0)[lb]{$z_1$}}%
\put(23.9000,-20.6000){\rotatebox{-45.0000}{\makebox(0,0)[lb]{$z_2$}}}%
\put(27.5000,-6.4000){\makebox(0,0)[lb]{$z_3$}}%
\put(37.8000,-9.2700){\makebox(0,0)[lb]{$1$}}%
\put(37.8000,-28.9400){\makebox(0,0)[lb]{$2$}}%
\put(17.4000,-9.9000){\makebox(0,0)[lb]{$5$}}%
\put(57.6300,-9.3900){\makebox(0,0)[lb]{$4$}}%
\put(57.7300,-29.5400){\makebox(0,0)[lb]{$3$}}%
%
\special{pn 8}%
\special{pa 3440 2505}%
\special{pa 2196 1262}%
\special{fp}%
\special{sh 1}%
\special{pa 2196 1262}%
\special{pa 2229 1323}%
\special{pa 2234 1300}%
\special{pa 2257 1295}%
\special{pa 2196 1262}%
\special{fp}%
%
\special{pn 8}%
\special{pa 3271 2673}%
\special{pa 2028 1430}%
\special{fp}%
\special{sh 1}%
\special{pa 2028 1430}%
\special{pa 2061 1491}%
\special{pa 2066 1468}%
\special{pa 2089 1463}%
\special{pa 2028 1430}%
\special{fp}%
%
\special{pn 8}%
\special{pa 3103 2842}%
\special{pa 1859 1599}%
\special{fp}%
\special{sh 1}%
\special{pa 1859 1599}%
\special{pa 1892 1660}%
\special{pa 1897 1637}%
\special{pa 1920 1632}%
\special{pa 1859 1599}%
\special{fp}%
%
\special{pn 8}%
\special{pa 2153 878}%
\special{pa 3574 878}%
\special{fp}%
\special{sh 1}%
\special{pa 3574 878}%
\special{pa 3507 858}%
\special{pa 3521 878}%
\special{pa 3507 898}%
\special{pa 3574 878}%
\special{fp}%
%
\special{pn 8}%
\special{pa 2153 695}%
\special{pa 3574 695}%
\special{fp}%
\special{sh 1}%
\special{pa 3574 695}%
\special{pa 3507 675}%
\special{pa 3521 695}%
\special{pa 3507 715}%
\special{pa 3574 695}%
\special{fp}%
%
\special{pn 8}%
\special{pa 3636 1141}%
\special{pa 3636 2562}%
\special{fp}%
\special{sh 1}%
\special{pa 3636 2562}%
\special{pa 3656 2495}%
\special{pa 3636 2509}%
\special{pa 3616 2495}%
\special{pa 3636 2562}%
\special{fp}%
%
\special{pn 8}%
\special{pa 3819 1141}%
\special{pa 3819 2562}%
\special{fp}%
\special{sh 1}%
\special{pa 3819 2562}%
\special{pa 3839 2495}%
\special{pa 3819 2509}%
\special{pa 3799 2495}%
\special{pa 3819 2562}%
\special{fp}%
%
\special{pn 8}%
\special{pa 4003 1141}%
\special{pa 4003 2562}%
\special{fp}%
\special{sh 1}%
\special{pa 4003 2562}%
\special{pa 4023 2495}%
\special{pa 4003 2509}%
\special{pa 3983 2495}%
\special{pa 4003 2562}%
\special{fp}%
%
\special{pn 8}%
\special{pa 4188 2218}%
\special{pa 5132 1118}%
\special{fp}%
\special{sh 1}%
\special{pa 5132 1118}%
\special{pa 5073 1156}%
\special{pa 5097 1158}%
\special{pa 5104 1182}%
\special{pa 5132 1118}%
\special{fp}%
%
\special{pn 8}%
\special{pa 4372 2402}%
\special{pa 5315 1301}%
\special{fp}%
\special{sh 1}%
\special{pa 5315 1301}%
\special{pa 5256 1339}%
\special{pa 5280 1342}%
\special{pa 5287 1365}%
\special{pa 5315 1301}%
\special{fp}%
%
\special{pn 8}%
\special{pa 5472 1485}%
\special{pa 4529 2585}%
\special{fp}%
\special{sh 1}%
\special{pa 4529 2585}%
\special{pa 4588 2547}%
\special{pa 4564 2545}%
\special{pa 4557 2521}%
\special{pa 4529 2585}%
\special{fp}%
%
\special{pn 8}%
\special{pa 5235 2562}%
\special{pa 4249 1501}%
\special{fp}%
\special{sh 1}%
\special{pa 4249 1501}%
\special{pa 4280 1563}%
\special{pa 4285 1540}%
\special{pa 4309 1536}%
\special{pa 4249 1501}%
\special{fp}%
%
\special{pn 8}%
\special{pa 5439 2401}%
\special{pa 4452 1339}%
\special{fp}%
\special{sh 1}%
\special{pa 4452 1339}%
\special{pa 4483 1401}%
\special{pa 4488 1378}%
\special{pa 4512 1374}%
\special{pa 4452 1339}%
\special{fp}%
%
\special{pn 8}%
\special{pa 4651 1204}%
\special{pa 5639 2265}%
\special{fp}%
\special{sh 1}%
\special{pa 5639 2265}%
\special{pa 5608 2203}%
\special{pa 5603 2226}%
\special{pa 5579 2230}%
\special{pa 5639 2265}%
\special{fp}%
\put(33.5000,-19.4700){\makebox(0,0)[lb]{$y_{z_2z_3}$}}%
\put(27.4000,-8.3000){\makebox(0,0)[lb]{$y_{z_1z_2}$}}%
\put(27.4000,-10.3000){\makebox(0,0)[lb]{$y_{x_1z_2}$}}%
\put(25.3000,-18.9000){\rotatebox{-45.0000}{\makebox(0,0)[lb]{$y_{z_3z_1}$}}}%
\put(27.0000,-17.1000){\rotatebox{-45.0000}{\makebox(0,0)[lb]{$y_{z_3x_1}$}}}%
\put(49.5000,-12.4000){\rotatebox{52.3264}{\makebox(0,0)[lb]{$y_{x_4x_1}$}}}%
\put(51.5000,-14.4000){\rotatebox{52.3264}{\makebox(0,0)[lb]{$y_{x_4z_1}$}}}%
\put(45.6000,-24.9000){\rotatebox{49.4609}{\makebox(0,0)[lb]{$y_{x_2x_3}$}}}%
\put(44.9800,-13.7000){\rotatebox{-45.4064}{\makebox(0,0)[lb]{$y_{z_1x_2}$}}}%
\put(43.1500,-15.6000){\rotatebox{-45.8069}{\makebox(0,0)[lb]{$y_{x_1x_2}$}}}%
\put(53.9200,-19.7100){\rotatebox{-44.1815}{\makebox(0,0)[lb]{$y_{x_3x_4}$}}}%
\end{picture}}%

$$

The elementary cycles with origin $1$ are 
\begin{align*}
	&y_{z_2z_3}z_2z_3,\quad    z_1y_{z_3z_1}z_3, \quad  x_1y_{z_3x_1}z_3,  \quad
		z_1z_2y_{z_1z_2}, \quad x_1z_2y_{x_1z_2}, \\
	&x_1x_2y_{x_1x_2},\quad z_1x_2y_{z_1x_2},\quad x_1y_{x_4x_1}x_4,\quad 
		z_1y_{x_4z_1}x_4,\quad y_{x_3x_4}x_3x_4.
\end{align*}
Moreover, there is only one $\alpha$-revived cycle with origin $1$, which is
$\gamma = x_1x_2x_3x_4$.

For the elementary cycle $y_{z_2z_3}z_2z_3$,
the supplement of $y_{z_2z_3}z_2$ in $y_{z_2z_3}z_2z_3$ is $z_3$
and the supplement of $z_2$ in $y_{z_2z_3}z_2z_3$ is $z_3y_{z_2z_3}$. 

\end{example}

%
%
\section{Nonzero oriented cycles in $T_\alpha(A)$}
Let $T_\alpha(A)$ be a Hochschild extension algebra same as in Section $2$.
We note that
the ordinary quiver 
$\Delta_{T_\alpha(A)}$ coincides with $\Delta_{T_0(A)}$ which is given by
\begin{itemize}
 \item $(\Delta_{T_0(A)})_0 = \Delta_0$,
 \item $(\Delta_{T_0(A)})_1 = \Delta_1 \cup \{y_{p_1} ,\ldots, y_{p_t} \}$,
\end{itemize}
where, for each $i$, $y_{p_i}$ is an arrow from $t(p_i)$ to $s(p_i)$.

In this section, we give a characterization of nonzero oriented cycles in
$T_\alpha(A)$.
We will show that a nonzero oriented cycle is either
an elementary cycle or an $\alpha$-revived cycle.

Consider the homomorphism of $K$-algebras
$\Phi: K\Delta_{T_\alpha(A)} \longrightarrow T_\alpha(A)$
defined on the trivial paths and the arrows as follows:
\begin{align*}
	\Phi(e_i) 
    	&= (\overline{e_i} ,\, 0) 
			\text{\,for $i= 1, \ldots , n$,}  \\
    \Phi(a) 
    	&= (\overline{a} ,\,0)
        	\text{\,for $a \in \Delta_1$}, \\
    \Phi(y_{p_i}) 
    	&= (0,\, \overline{p_i}^*) 
              \text{\,for $p_i \in \mathbb{M}$.}
\end{align*}
Then $\Phi$ is surjective.
Let $\pi_1$ and $\pi_2$ are the projections induced by the decomposition
$T_{\alpha}(A) = A \oplus D(A)$,
and we define
\begin{center}
	$\varphi_1 := \pi_1 \Phi : K\Delta_{T_\alpha(A)} \longrightarrow A$
    \quad and \quad
    $\varphi_2 := \pi_2 \Phi : K\Delta_{T_\alpha(A)} \longrightarrow D(A)$.
\end{center}

In this paper,
$Y$ denotes the ideal generated by the elements $y_{p_i}\,(i=1,\ldots,t)$ in $K\Delta_{T_\alpha(A)}$.

We show the following lemma.
\begin{lemma}\label{Lemma3.5}
    Suppose $z$ is a path in $K\Delta_{T_\alpha(A)}$.
    Then the following hold
    \begin{enumerate}[{\rm (1)}]
 		\item $\Phi(z)=0$ if $z$ contains two or more arrows $y_{p_i}(i= 1,\ldots,t)$. \label{(c)}
 		\item If $v=v_1 + v_2$ with a path $v_1 \in K\Delta$ and a path $v_2 \in Y$,
 					then
 					$$
 						\Phi(v) = \begin{cases} 
 						(\varphi_1(v_1),\,\varphi_2(v_2))
 						& \text{if  ${\rm length}\,v_1 < n$}, \\
 							(0,\,\varphi_2(v_1 + v_2))
 							& \text{if $n \leq {\rm length}\,v_1 \leq s$}, \\
 								(0,\,\varphi_2(v_2))
 								& \text{if $s < {\rm length}\,v_1$}.
 						\end{cases}
 					$$ \label{(a)}
 		\item $\varphi_2(z) \neq 0$ implies that $z \in Y$ or $n \leq {\rm length}\,z \leq s$. \label{(b)}
        \item Let $C$ be an elementary cycle or an $\alpha$-revived cycle with origin $e$.
        				Then, for any path $u(\neq e)$ in $K\Delta$, 
        				$\varphi_2(C)(\overline{e}) = w(C)$ 
                        and $\varphi_2(C)(\overline{u})=0$.\label{(g)}
        \item If $u$ is a path from $i$ to $j$ in $K\Delta$,
        				$v$ is a path from $j$ to $i$ in $K\Delta_{T_\alpha(A)}$
                        and both $uv$ and $vu$ are not $\alpha$-revived cycles,
                        then 
                        $
                        \varphi_2(v)(\overline{u}) 
                        	= \varphi_2(vu)(\overline{e_j})
                            	= \varphi_2(uv)(\overline{e_i}).
                        $\label{(e)}
        \item If  $u$ is a path from $i$ to $j$ in $K\Delta$,
        			 $v$ is a path from $j$ to $i$ in $K\Delta_{T_\alpha(A)}$ 
        				and both $uv$ and $vu$ are not $\alpha$-revived cycles,
        			  then $vu \in {\rm Ker}\,\Phi$ if and only if $uv \in {\rm Ker}\,\Phi$.\label{(i)}
        \item Let $u$ is a path in $K\Delta$ and $z$ a path in $Y$.
        			Then $\varphi_2(z)(\overline{u}) \neq 0$ implies
                    that $u$ is a supplement of $z$.\label{(d)}
	\end{enumerate}
\end{lemma}
\begin{proof}
	We get (\ref{(c)}) using that $D(A)^2 = 0$ in $T_\alpha(A)$.
    (\ref{(a)}) and (\ref{(b)}) follow directly from the definitions.
   
    (\ref{(g)})
    If $C$ is an elementary cycle,
    then (\ref{(g)}) follows from the definitions together with the hypothesis over $A$. 
    If $C$ is an $\alpha$-revived cycle,
    $C = ex_ix_{i+1} \cdots x_{i+s-1}$ for some $i \in \{1,\ldots s \}$.
    So we have
    \begin{align*}
    	\Phi (C) 
        	&= \Bigl( \overline{x_ix_{i+1}\cdots x_{i+n-1}},\, \alpha(\overline{x_i},\,\overline{x_{i+1}\cdots x_{i+n-1}}) \Bigr)
            (\overline{x_{i+n}},\,0) \cdots (\overline{x_{i+s-1}},\,0) \\
            	&= \Bigl( 0,\, \alpha \left( \overline{x_i},\,\overline{x_{i+1} \cdots x_{i+n-1}} \right) \Bigr)
                \left( \overline{x_{i+n}},\,0 \right) \cdots \left( \overline{x_{i+s-1}},\,0 \right) \\
                	&= \Bigl(0,\, \alpha(\overline{x_{i}},\,\overline{x_{i+1} \cdots x_{i+n-1}})\cdot
                    (\overline{x_{i+n} \cdots x_{i+s-1}}) \Bigr)
    \end{align*}
    and
    \begin{align*}
    	\varphi_2(C)(\overline{e})
        	&= \Bigl( \alpha(\overline{x_i},\,\overline{x_{i+1}\cdots x_{i+n-1}})\cdot
            (\overline{x_{i+n}\cdots x_{i+s-1}}) \Bigr) (\overline{e}) \\
            	&= w(C)\overline{x_{i+n}\cdots x_{i+s-1}}^*
                (\overline{ex_{i+n}\cdots x_{i+s-1}}) \\
                	&= w(C).
    \end{align*}
    It is clear that $\varphi_2(C)(\overline{u}) = 0$
    for any path $u$.
    
    (\ref{(e)}) 
    First, we consider the case $\overline{u} = 0$ in $A$.
    We have $\varphi_2(v)(\overline{u}) = 0$.
    On the other hand, since
    $$
    		\Phi(u) = \begin{cases}
    			(\overline{u}, \, 0)  &\text{if ${\rm length}\,u < n$}, \\
    			(0,\, \varphi_2(u) )  & \text{if $n \leq {\rm length}\,u \leq q$}, \\
    					0 & \text{otherwise,}
	\end{cases}
    $$
    we have
    \begin{align*}
    		\varphi_2(vu)(\overline{e_j}) &= 
    		\pi_2 \Bigl( \Phi(v)\Phi(u) \Bigr)(\overline{e_j}) \\ &= 
    		\begin{cases}
				\pi_2\Bigl(\Phi(v) (\overline{u},\, 0) \Bigr) (\overline{e_j})  &\text{if ${\rm length}\,u < n$}, \\
					\pi_2\Bigl(\Phi(v) (0,\, \varphi_2(u))\Bigr)(\overline{e_j}) &\text{if $n \leq {\rm length}\,u \leq s$}, \\
						0 & \text{otherwise}.
			\end{cases}
   \end{align*}
    If ${\rm length}\,u \leq n$
    then $\pi_2(\Phi(v) (\overline{u},\, 0))(\overline{e_j}) = 0$ by $\overline{u} = 0$ in $A$.
    So we consider the case that $n \leq {\rm length}\,u \leq s$.
    We put $m = {\rm length}\,u$. Then $u = a_1a_2 \cdots a_m$ for some arrows
    $a_1, a_2, \ldots a_m \in \Delta$,
    and we have
    \begin{align*}
		&\varphi_2(u) \\
		&=
			\pi_2\Bigl((\overline{a_1},\,0)(\overline{a_2},\,0)
			\cdots(\overline{a_n},\,0)(\overline{a_{n+1}},\,0)
			\cdots(\overline{a_m},\,0)\Bigr) \\
				&= \pi_2\Bigl(\bigl(\overline{a_1a_2\cdots a_n},\, \alpha(\overline{a_1a_2\cdots a_{n-1}}\overline{a_n})\bigr)
				(\overline{a_{n+1}},\,0) \cdots (\overline{a_{m}},\,0)\Bigr) \\
					&= \begin{cases}
							\pi_2\Bigl( (0,\, \overline{x_{i+n}\cdots x_{i+s-1}}^*)(\overline{a_{n+1}},\,0) \cdots (\overline{a_{m}},\,0)\Bigr) 
							& \text{if $a_1\cdots a_n = x_i \cdots x_{i+n-1}$} \\ 
							& \quad \text{for some $i \in \{1,\ldots,s \}$}, \\
								\pi_2\Bigl((0,\,0)(\overline{a_{n+1}},\,0) \cdots (\overline{a_{m}},\,0)\Bigr) 
								& \text{otherwise}
					\end{cases} \\
						&= \begin{cases}
							\pi_2\Bigl(0,\, \overline{x_{i+n}\cdots x_{i+s-1}}^* \cdot \overline{a_{n+1} \cdots a_m}\Bigr)
							 & \text{if $a_1\cdots a_n = x_i \cdots x_{i+n-1}$} \\ 
							& \quad \text{for some $i \in \{1,\ldots,s \}$}, \\
								0 & \text{otherwise}
						\end{cases} \\
							&= \begin{cases}
									\overline{x_{i+n}\cdots x_{i+s-1}}^*\cdot \overline{a_{n+1} \cdots a_m}
							 		& \text{if $a_1\cdots a_n = x_i \cdots x_{i+n-1}$} \\ 
										& \quad \text{for some $i \in \{1,\ldots,s \}$}, \\
										0 & \text{otherwise}.
							\end{cases}
	\end{align*}
	On the other hand, $\Phi(v) = (\overline{v},\,0)$ or $\Phi(v) = (0,\, f)$ for some $f \in D(A)$.
	So we have
	\begin{align*}
		&\pi_2\Bigl(\Phi(v) (0,\, \varphi_2(u))\Bigr)(\overline{e_j}) \\
			&= \begin{cases}
				\pi_2\Bigl((\overline{v},\,0) (0,\, \varphi_2(u)) \Bigr) (\overline{e_j})
				& \text{if $\Phi(v) = (\overline{v},\,0)$}, \\
					0 & \text{otherwise}
			\end{cases} \\
				&= \begin{cases}
					\pi_2\Bigl(
						0,\,
						\overline{v} \cdot \overline{x_{i+n}\cdots x_{i+s-1}}^* \cdot
						\overline{a_{n+1} \cdots a_m}\Bigr)(\overline{e_j}
					)
					& \text{if $\Phi(v) = (\overline{v},\,0)$ and} \\
					& \quad \text{$a_1\cdots a_n = x_i \cdots x_{i+n-1}$} \\ 
					& \quad \text{for some $i \in \{1,\ldots,s \}$}, \\
						0 & \text{otherwise}
				\end{cases} \\
					&= \begin{cases}
						\overline{x_{i+n}\cdots x_{i+s-1}}^*
						(\overline{a_{n+1} \cdots a_m v})
						& \text{if $\Phi(v) = (\overline{v},\,0)$ and} \\
						& \quad \text{$a_1\cdots a_n = x_i \cdots x_{i+n-1}$} \\ 
						& \quad \text{for some $i \in \{1,\ldots,s \}$}, \\
							0 & \text{otherwise}.
					\end{cases}•
	\end{align*}
	If $\overline{x_{i+n}\cdots x_{i+s-1}}^*(\overline{a_{n+1} \cdots a_m v}) \neq 0$,
	then $uv = a_1 \cdots a_m v = x_i \cdots x_{i+s-1}$.
	This contradicts that $uv$ is not an $\alpha$-revived cycle. 
	
	Next we consider the case $\overline{u} \neq 0$ in $A$.
    Since $v$ is a path,
    by Lemma \ref{Lemma3.5} (\ref{(a)}), 
    $\Phi(v) = (\overline{v},\,0)$ or $\Phi(v) = (0,\,f)$
    for some $f \in D(A)$.
    Since $vu$ is not an $\alpha$-revived cycle,
    if $\Phi(v) = (\overline{v},\, 0)$,
    then we have $\alpha(\overline{v},\, \overline{u}) = 0$ and
    \begin{align*}
    	\varphi_2(vu)(\overline{e_j})
        	= \pi_2\Bigl((\overline{v},\,0)(\overline{u},\,0)\Bigr)(\overline{e_j}) 
            	= \pi_2\Bigl(\overline{vu}, \alpha(\overline{v},\overline{u})\Bigr)(\overline{e_j}) 
                	= \pi_2(\overline{vu},\,0)(\overline{e_j}) 
                    	=0.
    \end{align*}
    On the other hand,
    $\varphi_2(v)(\overline{u}) = \pi_2(\overline{v},\,0)(\overline{u}) = 0$.
	If $\Phi(v) = (0,\,f)$, then we have
    \begin{align*}
    	\varphi_2(vu)(\overline{e_j}) 
        	= \pi_2\Bigl((0,\,f)(\overline{u},\,0)\Bigr)(\overline{e_j})
        		= (f\overline{u})(\overline{e_j})
            			= \pi_2(0,\,f)(\overline{u})
                    		= \varphi_2(v)(\overline{u}).
    \end{align*}
	Similarly, $\varphi_2(v)(\overline{u}) = \varphi_2(vu)(\overline{e_j})$.
    
	(\ref{(i)}) Since $vu$ and $uv$ are not $\alpha$-revived cycles,
    it suffices to consider the case $vu,\,uv \in Y$.
    By the assumption that any oriented cycle in $K\Delta$ is zero in $A$,
    for any path $x$ in $\Delta$,
    $$
    \varphi_2(vu)(\overline{x}) = \begin{cases}
							\varphi_2(vu)(\overline{e_j})  & \text{if $\overline{x}= \overline{e_j}$}, \\
							0 & \text{otherwise},
						\end{cases}
    $$
    and
    $$
    \varphi_2(uv)(\overline{x}) = \begin{cases}
							\varphi_2(uv)(\overline{e_i})  & \text{if $\overline{x}= \overline{e_i}$}, \\
							0 & \text{otherwise}.
						\end{cases}
    $$
    If $vu \in {\rm Ker}\,\Phi$,
    by (\ref{(e)}), then
    $\varphi_2(uv)(\overline{e_i}) = \varphi_2(vu)(\overline{e_j}) = 0$.
    So we have $\varphi_2(uv) = 0$, and $vu \in {\rm Ker}\,\Phi$ implies that
    $uv \in {\rm Ker}\,\Phi$.
    The converse holds, similarly.  
    
    (\ref{(d)}) By (\ref{(c)}), $z=\delta_2 y_{p_i} \delta_1$ with paths $\delta_1$, $\delta_2$ in $K\Delta$
    and $i \in \{ 1, \ldots, t \}$.
    Then $\overline{p_i}^*(\overline{\delta_1 u \delta_2}) = \varphi(z)(\overline{u}) \neq 0$.
    Therefore $u$ is a supplement of $z$ in the elementary cycle $\delta_2 y_{p_i} \delta_1 u$.
\end{proof}

\begin{theorem}\label{Corollary3.8}
	Let $C$ be an oriented cycle in $K\Delta_{T_\alpha(A)}$.
    Then the following conditions are equivalent:
    \begin{enumerate}[{\rm (i)}]
 		\item $C$ is an elementary cycle or $\alpha$-revived cycle.
        \item $C$ is nonzero in $T_\alpha(A)$.
    \end{enumerate}
\end{theorem}

\begin{proof}
	By Lemma \ref{Lemma3.5} (\ref{(g)}),
	(i) implies (ii).	
	We will show that (ii) implies (i).
	Let $C_1$ be a cycle in $K\Delta$,
	that is,
	$C_1 = z_1 \cdots z_m$ for some $m \geq n$ and $z_1, \ldots, z_m \in \Delta_1$.
	We  assume that $\varphi_2(C_1) \neq 0$ and we will show that
	$C_1$ is an $\alpha$-revived cycle.
	By Lemma \ref{Lemma3.5} (\ref{(a)}),
	$\Phi(C_1) = (0,\, \varphi_2(C_1))$.
	By Lemma \ref{Lemma3.5} (\ref{(a)}),
	we have $n \leq m \leq s$ and 
	\begin{align*}
		0 &\neq \varphi_2(C_1) \\
			&= \pi_2\Bigl((\overline{z_1},\, 0)\cdots (\overline{z_m},\, 0)\Bigr) \\
				&= \pi_2 \Bigl(
						(0,\, \alpha(\overline{z_1\cdots z_{n-1}},\,\overline{z_n}))
						(\overline{z_{n+1}},\, 0)
						\cdots
						(\overline{z_{m}},\, 0)
					\Bigr)  \\
					&= \alpha(\overline{z_1\cdots z_{n-1}} ,\, \overline{z_n}) \cdot
					\overline{z_{n+1} \cdots z_m}.
	\end{align*}
	So $\alpha(\overline{z_1\cdots z_{n-1}} ,\, \overline{z_n}) \neq 0$,
	we have 
	$z_1 \cdots z_n$ coincides with $x_i \cdots x_{i+n-1}$
	and
	$
	  \alpha(\overline{z_1 \cdots z_{n-1}},\, \overline{z_n}) 
	    = (\overline{x_{i+n} \cdots x_{i+s-1}})^*
	$
	for some $i \in \{ 1,\ldots, s\}$.
	Since
	$$
	  0 \neq
	    \alpha(\overline{z_1\cdots z_{n-1}} ,\, \overline{z_n})\cdot \overline{z_{n+1} \cdots z_m}
	      = \overline{x_{i+n} \cdots x_{i+s-1}}^* \cdot \overline{z_{n+1} \cdots z_{n}},
	$$
	there exists a path $\delta$ in $\Delta$ such that
	$$
	  0 \neq (\overline{x_{i+n} \cdots x_{i+s-1}}^* \cdot \overline{z_{n+1} \cdots z_{n}})(\overline{\delta})
	    = \overline{x_{i+n} \cdots x_{i+s-1}}^*(\overline{z_{n+1} \cdots z_{n}}\overline{\delta}).
	$$
	Then $z_{n+1} \cdots z_m \delta = x_{i+n} \cdots x_{i+s-1}$
	and $z_1 \cdots z_m \delta$ coincides with an $\alpha$-revived cycle
	$x_{i} \cdots x_{i+n-1} x_{i+n} \cdots x_{i+s-1}$.
	Therefore, by that $C_1 = z_1 \cdots z_m$ is a cycle, $\delta$ is a trivial path
	and $C_1$ is an $\alpha$-revived cycle.	
    
    Next, let $C_2$ be a cycle from $j$ to $j$ in $Y$.
     We assume that $\Phi(C_2) \neq 0$.
    We have $\Phi(C_2) = (0,\, \varphi_2(C_2))$,
    by Lemma \ref{Lemma3.5} (\ref{(a)}).
    By Lemma \ref{Lemma3.5} (\ref{(c)}),
    suppose now that $C_2$ contains exactly one arrow $y_{p_i}$.
    Then $C_2 = \delta_2 y_{p_i} \delta_1$
    with $\delta_1$, $\delta_2$ paths in $K\Delta$.
    Since $\varphi_2(C_2) \neq 0$, there exists a path $u$
    in $K\Delta$ such that $\varphi_2(C_2)(\overline{u}) \neq 0$.
    Then $u$ is a path from $j$ to $j$ and $\overline{u} \neq 0$.
    So, by the hypothesis on $A$, $u = e_j$ and
    thus $C_2$
    is an elementary cycle.
    
    Therefore, 
    any nonzero oriented cycle is elementary or $\alpha$-revived. 
    \end{proof}
    
\begin{proposition}\label{Proposition3.6}
    For each $j \in (\Delta_{T_\alpha(A)})_0$,
    let $I_j$ be the ideal in $K\Delta_{T_\alpha(A)}$ generated by
    \begin{enumerate}
 		\item[{\rm (i)}] oriented cycles in $K\Delta_{T_\alpha(A)}$ from $j$ to $j$
        		which are neither elementary nor $\alpha$-revived,
 		\item[{\rm (ii)}] elements $w(C')C - w(C)C' \in K\Delta_{T_\alpha(A)}$,
        		where $C$, $C'$ are elementary or $\alpha$-revived cycles
                with origin $j$.
	\end{enumerate}
	Then ${\rm Ker}\,\Phi \cap e_j K\Delta_{T_\alpha(A)}e_j$ generates $I_j$.
\end{proposition}

\begin{proof}
	By the proof of Theorem \ref{Corollary3.8},
	oriented cycles from $j$ to $j$
    which are neither elementary nor $\alpha$-revived 
    are in ${\rm Ker}\,\Phi \cap e_j K\Delta_{T_\alpha(A)}e_j$.
    Let now $z = w(C') C - w(C) C'$
    be an element as defined in (ii).
    Then 
    $\Phi(z) = (0,\, \varphi_2(z))$, and
    $\varphi_2(z) = w(C') \varphi_2(C) - w(C)\varphi_2(C')$.
    By Lemma \ref{Lemma3.5} (\ref{(g)}), for any path $u$ in $K\Delta$,
    \begin{align*}
    	\varphi_2(z)(\overline{u})
        	&= (w(C') \varphi_2(C) )(\overline{u})
            - (w(C)\varphi_2(C'))(\overline{u}) \\
            	&= \begin{cases}
                	w(C')w(C) - w(C)w(C') & \text{if $u= e_j$}, \\
                    0   & \text{otherwise}
                \end{cases} \\
                	&= 0.
    \end{align*}
    Hence, $z \in {\rm Ker}\,\Phi \cap e_j K\Delta_{T_\alpha(A)}e_j$.
    
    Thus $I_j$ is contained in the ideal generated by
    ${\rm Ker}\,\Phi \cap e_j K\Delta_{T_\alpha(A)}e_j$.
    The proof of the other inclusion is similar to the proof of
    \cite[Proposition 3.6]{FP}.
\end{proof}

%
%
\section{An application of a theorem of Brenner}
In this section, we give the number of indecomposable direct summands of
the middle term of almost split sequence for $T_\alpha(A)$.
We define a relation on the set of nonzero oriented cycles
with same origin in $\Delta_{T_\alpha(A)}$.
We will show that the above number is equal to the cardinality of the equivalence classes. 

The following proposition is a generalization of Remark \ref{Remark3.3}.
\begin{proposition}\label{path is in a nonzero oriented cycle}
	Any nonzero path in $T_\alpha(A)$ is
	contained in a nonzero oriented cycle.
\end{proposition}

\begin{proof}
	Let $w$ be a nonzero path in $T_\alpha(A)$.
	Then $w$ is in $K\Delta$ or in $Y$. 
	First, we suppose that $w \in K\Delta$.
	Since $\Phi(w) \neq 0$, 
	we have ${\rm length}\, w \leq s$
	by Lemma \ref{Lemma3.5} (\ref{(a)}).
    If $n \leq {\rm length}\, w \leq s$,
    by the construction of $\alpha$
    and the same calculation as in the proof of Lemma \ref{Lemma3.5} (\ref{(e)}),
    $w$ is contained in an $\alpha$-revived cycle.
    If ${\rm length}\, w < n$, then $w \neq 0$ in $A$ and
    $w$ is in an elementary cycle by Remark \ref{Remark3.3}.
    
    Next, we suppose that $w \in Y$.
    We have  $\varphi_2(w) \neq 0$.
    Hence $\varphi_2(w)(\overline{u}) \neq 0$ for some path $u$ in $K\Delta$.
    By Lemma \ref{Lemma3.5} (\ref{(d)}), $u$ is a supplement of $w$,
    so $w$ is contained in an elementary cycle $wu$.
\end{proof}
\begin{definition}
	For each $h \in (\Delta_{T_\alpha(A)})_0$,
	let us denote by $\mathscr{C}_h$
    the set of all oriented cycles $C$ such that
    $C \neq 0$ in $T_\alpha(A)$
    and $s(C) = t(C) = h$.
    Let $C,\, C'$ be in $\mathscr{C}_h$.
    If there exists an arrow $a$ belonging to
    $C$ and $C'$ with
    $s(a) = h$ or $t(a) = h$,
    then we write $C \mathscr{R} C'$.
\end{definition}

\begin{definition}
	For each $h \in (\Delta_{T_\alpha(A)})_0$,
	let $\mathscr{A}_h = \{ a \in (\Delta_{T_\alpha(A)})_1 \mid t(a)=h \}$.
	For
	$a,\, a' \in \mathscr{A}_h$,
	 if there exists an arrow $b \in (\Delta_{T_\alpha(A)})_1$
    such that $ab \neq 0$ and $a'b \neq 0$ in $T_\alpha(A)$
    then we write
	$a \mathscr{R}' a'$.
\end{definition}

We note that, for any path $a \in \mathscr{A}_h$, $a \mathscr{R}' a$ holds by
Proposition \ref{path is in a nonzero oriented cycle}.

From now on, we denote by $``\equiv"$ and $``\approx"$
the equivalence relations generated by $\mathscr{R}$ in $\mathscr{C}_h$
and by $\mathscr{R}'$ in $\mathscr{A}_h$, respectively.

We next want to give the precise connection between these equivalence
relations. For this purpose, the following results will be useful.

\begin{proposition}\label{Cor4.8}
	${\rm card}(\mathscr{C}_h/{\equiv}) = {\rm card}(\mathscr{A}_h/{\approx})$.
\end{proposition}

\begin{proof}
For $C \in \mathscr{C}_h$,
we denote by $a_C$ the last arrow of $C$,
that is,
$C = \delta a h$ for some path $\delta$.
Consider the map
$u : \mathscr{C}_h/{\equiv} \longrightarrow \mathscr{A}_h/{\approx}$
defined by $u(\overline{C}) = \overline{a_C}$
for $C \in \mathscr{C}_h$.
We will show that $u$ is well-defined.
It suffices to show that if $C_1 \mathscr{R} C_2$
then $a_{C_1} \mathscr{R'}a_{C_2}$.
We assume
$C_1 \mathscr{R} C_2$.
Then $a_{C_1} = a_{C_2}$ or
$C_i = b \delta_i a_{C_i}$ for some arrow $b$ and some path $\delta_i$ $(i = 1,\,2)$.
If $a_{C_1} = a_{C_2}$ then $a_{C_1} \mathscr{R'}a_{C_2}$, clearly.
If $C_i = b \delta_i a_{C_i}$,
by Theorem \ref{Corollary3.8},
then $C_i$ is an elementary cycle or an $\alpha$-revived cycle (i=1,\,2).
Thus,
by Remark \ref{Remark2} and Remark \ref{Remark3},
$a_{C_i}\delta_i b \neq 0$
and both $a_1b$ and $a_2b$ are not zero.
So we have $a_{C_1} \mathscr{R'} a_{C_2}$.
And $u$ is well-defined.
On the other hand,
for $a \in \mathscr{A}_h$,
by Proposition \ref{path is in a nonzero oriented cycle},
there is a cycle $\delta_1 ah \delta_2(\neq 0)$ for some path $\delta_1$
and $\delta_2$.
By Remark \ref{Remark2} and Remark \ref{Remark3},
$\delta_2 \delta_1 ah \neq 0$.
We denote $C_a :=\delta_2\delta_2 ah$.
Then $\overline{C_a}$ does not depend on the choice
of $\delta_1$ and $\delta_2$,
that is,
$\overline{C_a}$ is uniquely determined. 
We define the map
$v : \mathscr{A}_h/{\approx}\, \longrightarrow \mathscr{C}_h/{\equiv}$
by $v(\overline{a}) = \overline{C_a}$.
We will show that $v$ is well-defined.
If $a_1 \mathscr{R'} a_2$ then there exists a path $b$ such that
$a_1b \neq 0$ and $a_2b \neq 0$.
By Proposition \ref{path is in a nonzero oriented cycle},
there is a cycle $\gamma_i a_i b \gamma'_i$
for some path $\gamma_i$ and $\gamma'_i$ $(i = 1,\,2)$.
Thus we have
$$
	C_{a_1} 
		\mathscr{R}\, (b \gamma'_1 \gamma_1 a_1)
			\mathscr{R}\, ( b \gamma'_2 \gamma_2 a_2)
				\mathscr{R}\, C_{a_2}.
$$
So $C_{a_1} \equiv C_{a_2}$.
It is easy to see that $u$ and $v$ are mutually
inverse maps.
\end{proof}

%
%
%

We have the  following theorem, which is similar to \cite[Proposition 4.9]{FP}: 
\begin{proposition}
	Let $h$ be a vertex in $\Delta_{T_\alpha(A)}$,
    and let $e_h$ be the idempotent element corresponding to $h$. 
    Then we have $N_{e_h}= n_{e_h} = {\rm card}(\mathscr{C}_h/{\equiv})$.
\end{proposition}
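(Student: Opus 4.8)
The plan is to connect Brenner's combinatorial invariants $N_{e_h}$ and $n_{e_h}$ directly to the equivalence classes in $\mathscr{C}_h/{\equiv}$, using Proposition~\ref{Cor4.8} to pass to the more tractable set $\mathscr{A}_h/{\approx}$. The key observation is that Brenner's data — the sets of arrows $\Lambda_i,\Gamma_i$ attached to the primitive idempotent $e_h$ — are governed by which products of arrows are nonzero in $T_\alpha(A)$, and by Theorem~\ref{Corollary3.8} a product of arrows forming a cycle through $h$ is nonzero precisely when it is elementary or $\alpha$-revived. So I expect the combinatorics of Brenner's $\mathscr{N}$ to be rigidly controlled by the relation $\mathscr{R}'$ on $\mathscr{A}_h$.

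First I would recall that $T_\alpha(A)$ is self-injective (being a Hochschild extension of $A$ by $D(A)$), so every simple module is noninjective and Theorem~\ref{Brenner1} applies with $S \cong e_hA/\operatorname{rad} e_hA$. The heart of the argument is to realize Brenner's orthogonality condition~(4) — namely $a \in \Lambda_i$, $b \in \Gamma_j$ with $i\neq j$ or $i=0$ or $j=0$ forces $ab=0$ — in terms of $\mathscr{R}'$. The arrows $b \in \Gamma_0 \cup \cdots \cup \Gamma_n$ form a complete set for $\operatorname{rad} e_h T_\alpha(A)$, i.e. the arrows $b$ with $s(b)=h$; the arrows $a \in \Lambda_0 \cup \cdots \cup \Lambda_n$ form a complete set for $\operatorname{rad} T_\alpha(A)e_h$, i.e. the arrows with $t(a)=h$, which is exactly $\mathscr{A}_h$. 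The condition $ab \neq 0$ in $T_\alpha(A)$ for $a \in \mathscr{A}_h$ and $b$ with $s(b)=h$ is what defines when two members of $\mathscr{A}_h$ are related by $\mathscr{R}'$. Thus a partition of $\mathscr{A}_h$ into the $\Lambda_i$ compatible with an orthogonal partition of the $b$'s into $\Gamma_i$ (so that $a\in\Lambda_i$, $b\in\Gamma_j$, $i\neq j$ kills $ab$) is precisely a partition refining the $\mathscr{R}'$-equivalence classes, with the index $i$ playing the role of a class label.

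The main computation is then to show that for the self-injective algebra $T_\alpha(A)$ the optimal choice maximizing $N = n + \operatorname{card}\Lambda_0$ and then minimizing $n$ forces both $\Lambda_0 = \Gamma_0 = \emptyset$ and $n+1$ equal to the number of $\approx$-classes, giving $N_{e_h} = n_{e_h} = \operatorname{card}(\mathscr{A}_h/{\approx})$. The reason $\Lambda_0$ and $\Gamma_0$ drop out is that by Proposition~\ref{path is in a nonzero oriented cycle} every arrow in $\mathscr{A}_h$ (and every $b$ with $s(b)=h$) sits in some nonzero cycle, so the reflexive relation $a\mathscr{R}'a$ always holds and no arrow needs to be isolated into a zeroth block whose pairings are unconstrained; the partition into $\approx$-classes already satisfies condition~(4). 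Combining this with Proposition~\ref{Cor4.8}, which gives $\operatorname{card}(\mathscr{A}_h/{\approx}) = \operatorname{card}(\mathscr{C}_h/{\equiv})$, yields the claimed equalities $N_{e_h} = n_{e_h} = \operatorname{card}(\mathscr{C}_h/{\equiv})$.

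The step I expect to be the main obstacle is the verification that the $\mathscr{R}'$-classes simultaneously induce a valid orthogonal partition of the \emph{outgoing} arrows $b$ into the $\Gamma_i$, matching the partition of the incoming arrows $a$ into $\Lambda_i$. This requires checking that $\mathscr{R}'$ is genuinely an equivalence relation whose structure is symmetric between incoming and outgoing arrows at $h$ — essentially the content already packaged into Proposition~\ref{Cor4.8} via the mutually inverse maps $u,v$, so I would lean on that symmetry. The delicate point is ensuring that maximizing $N$ does not benefit from placing some arrows in $\Lambda_0$ (which contributes to $N$ via $\operatorname{card}\Lambda_0$ rather than via $n$); here the self-injectivity of $T_\alpha(A)$, which makes each $\mathscr{R}'$-class nonempty and forces a clean correspondence between the two complete sets of arrows, should rule out any advantage from a nonempty $\Lambda_0$, pinning $N_{e_h}$ and $n_{e_h}$ to the common value $\operatorname{card}(\mathscr{A}_h/{\approx})$.
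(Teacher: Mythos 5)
Your overall architecture coincides with the paper's: build Brenner data from the $\approx$-classes of $\mathscr{A}_h$, show that every admissible Brenner datum is controlled by those classes, and finish with Proposition \ref{Cor4.8}. However, the pivotal combinatorial claim is stated in the wrong direction, and as written the optimization would fail. You claim that a partition $\{\Lambda_i\}$ of $\mathscr{A}_h$ admitting an orthogonal partition $\{\Gamma_i\}$ is ``precisely a partition refining the $\mathscr{R}'$-equivalence classes.'' The truth is the opposite: Brenner's condition (4) forces each $\approx$-class to lie inside a \emph{single} $\Lambda_i$ with $i\geq 1$ (if $a\in\Lambda_i$, $a'\in\Lambda_j$, and $ab\neq 0\neq a'b$ with $b\in\Gamma_l$, then condition (4) gives $l=i$ and $l=j$, hence $i=j\neq 0$), so the admissible partitions are the \emph{coarsenings} of the $\approx$-partition, not its refinements. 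The direction is not cosmetic: a refinement splitting a class $\{a,a'\}$ with $ab\neq 0\neq a'b$ is never admissible (no $\Gamma_l$ can contain $b$), whereas if refinements were admissible then maximizing $N=n+{\rm card}\,\Lambda_0$ would drive $n$ up to ${\rm card}(\mathscr{A}_h)$ via the singleton partition, giving $N_{e_h}={\rm card}(\mathscr{A}_h)$ rather than ${\rm card}(\mathscr{A}_h/{\approx})$. Relatedly, your statement that optimality forces ``$n+1$ equal to the number of $\approx$-classes'' is off by one and contradicts your own conclusion; the correct count, with $t={\rm card}(\mathscr{A}_h/{\approx})$, is that every admissible datum satisfies $N=n\leq t$, while the $\approx$-partition itself, with $\Gamma_i=\{b\mid \exists\, a\in\Lambda_i \text{ such that } ab\neq 0\}$, realizes $(t,\,t)\in\mathscr{N}$, whence $N_{e_h}=n_{e_h}=t$.

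A second problem is your treatment of $\Lambda_0$: you frame its emptiness as an optimization matter (``no advantage from a nonempty $\Lambda_0$''), but what the proof needs, and what is true, is that $\Lambda_0=\emptyset$ for \emph{every} admissible datum: if $a\in\Lambda_0$, condition (4) forces $ab=0$ for every $b$ in every $\Gamma_j$, contradicting the completeness of $\bigcup_j\Gamma_j$ together with Proposition \ref{path is in a nonzero oriented cycle}, which supplies an arrow $b$ with $ab\neq 0$. Without this forced emptiness one cannot conclude $N=n$ for all data, and then the bound $N\leq t$ (hence the computation of $N_{e_h}$) collapses. Finally, the appeals to self-injectivity of $T_\alpha(A)$ and to Theorem \ref{Brenner1} are superfluous here: the proposition is a purely combinatorial statement about $N_{e_h}$ and $n_{e_h}$ as defined from $\mathscr{N}$; Brenner's theorem enters only in the results that follow it.
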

\begin{proof}
The proof is similar to \cite[Proposition 4.9]{FP}.
Let $\Lambda_1, \ldots, \Lambda_t$ be the equivalence classes of $\mathscr{A}_h$ by $\approx$. 
We put 
$\Gamma_i = \{ \text{$b$\,:\,arrow}  \mid \text{$\exists a \in \Lambda_i$ s.t. $ab \neq 0$} \}$
for each $i (1 \leq i \leq t)$,
and we set $\Lambda_0 = \Gamma_0 = \emptyset$.
We note that
$\Gamma_i \neq \emptyset$ for each $i = 1,\ldots,t$.
By construction, the pair $(t,\,t)$ is in $\mathscr{N}$.
We shall prove that  if $(N,\,n) \in \mathscr{N}$,
then $N-n \geq t$.
In fact, let $\Lambda'_i$ and $\Gamma'_i$ be sets of arrows satisfying Brenner's condition,
for $0 \leq i \leq n$.
Using the above conditions,
we conclude that
\begin{enumerate}
	\item $\Gamma'_i = \{b \mid \text{there exists $a \in \Lambda'_i$ with $ab \neq 0$ }  \}$
				for $i=1,\ldots,n$,
	\item $\Gamma'_0 = \Lambda'_0 = \emptyset$.
\end{enumerate}
It follows from (2) that $N=n$.
On the other hand, it is easy to see that if $a \approx a'$
then there exists $j$ such that $a,\, a' \in \Lambda'_j$.
Then, for $1 \leq i \leq t$,
there exists $j$ such that
$\Lambda_i \subset \Lambda'_{j}$,
which implies $n \leq t$.
Therefore,
$
  N_{e_h} = n_{e_h} = {\rm card}(\mathscr{A}_h/ \approx)={\rm card}(\mathscr{C}_h/{\equiv})
$
by Proposition \ref{Cor4.8}. 
\end{proof}

The following theorems are partial generalizations of \cite{FP}.

\begin{theorem}\label{cor1}
	Let $S_h$ be the simple $T_\alpha(A)$-module corresponding to the
    vertex $h$.
    Then the number of indecomposable direct summands of the middle
    term of almost split sequence
    $$
    	0 \longrightarrow S_h \longrightarrow E \longrightarrow \tau^{-1}S_h \longrightarrow 0
    $$
    is equal to the number of equivalence classes in $\mathscr{C}_h$.
    Furthermore, the number of indecomposable projective summands of
    $E$ is equal to zero.
\end{theorem}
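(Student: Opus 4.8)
The plan is to invoke Brenner's Theorem~\ref{Brenner1} directly, using the preceding proposition that computes $N_{e_h}$ and $n_{e_h}$. Since that proposition already establishes $N_{e_h} = n_{e_h} = {\rm card}(\mathscr{C}_h/{\equiv})$, the first assertion is almost immediate, and the second assertion about projective summands will follow from the equality $N_{e_h} = n_{e_h}$ together with the count $N_{e_h} - n_{e_h}$ appearing in Brenner's theorem. The only genuine hypothesis I must verify before applying Theorem~\ref{Brenner1} is that the simple module $S_h$ is noninjective, and I must also reconcile the two presentations of the simple module (Brenner uses $eA/{\rm rad}\,eA$ whereas the statement fixes $S_h$ by the vertex $h$).

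\textbf{Step 1: Identify the simple module and check noninjectivity.} First I would let $e_h$ be the primitive idempotent corresponding to the vertex $h$, so that $S_h \cong e_hT_\alpha(A)/{\rm rad}\,e_hT_\alpha(A)$ as in Brenner's setup. I would then argue that $S_h$ is noninjective. The natural route is to recall that $T_\alpha(A)$ is a self-injective (in fact symmetric) algebra, since it is a Hochschild extension of $A$ by $D(A)$; for such an algebra the injective indecomposables coincide with the projective indecomposables, and a simple module is injective precisely when it is projective, i.e. when the corresponding indecomposable projective is simple. Because we assumed ${\rm dim}\,A > 1$ and the quiver $\Delta_{T_\alpha(A)}$ has arrows emanating from every vertex (every vertex lies on a nonzero cycle by Proposition~\ref{path is in a nonzero oriented cycle}), no indecomposable projective is simple, so $S_h$ is noninjective and an almost split sequence starting at $S_h$ exists.

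\textbf{Step 2: Apply Brenner's theorem and extract both numbers.} With $S_h$ noninjective, Theorem~\ref{Brenner1} applies and tells me that the middle term $E$ of the almost split sequence starting at $S_h$ has exactly $N_{e_h}$ indecomposable direct summands, of which exactly $N_{e_h} - n_{e_h}$ are projective. I would then substitute the values from the previous proposition: since $N_{e_h} = {\rm card}(\mathscr{C}_h/{\equiv})$, the number of indecomposable summands is ${\rm card}(\mathscr{C}_h/{\equiv})$, which is precisely the number of equivalence classes in $\mathscr{C}_h$. For the projective summands, the equality $N_{e_h} = n_{e_h}$ forces $N_{e_h} - n_{e_h} = 0$, so $E$ has no indecomposable projective summands.

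\textbf{Anticipated obstacle.} I expect the main point requiring care to be Step~1, namely the clean verification that $S_h$ is noninjective and, relatedly, that the indexing by the vertex $h$ genuinely matches Brenner's condition $S \cong eA/{\rm rad}\,eA$ (note Brenner phrases arrows via $xay$ with right modules, so one should be attentive to left/right conventions). The self-injectivity of $T_\alpha(A)$ is standard for Hochschild extension algebras by $D(A)$, but it should be cited explicitly rather than assumed. Everything else is a formal substitution of the already-proved equalities $N_{e_h} = n_{e_h} = {\rm card}(\mathscr{C}_h/{\equiv})$ into Theorem~\ref{Brenner1}, so once noninjectivity is secured the remainder of the argument is routine.
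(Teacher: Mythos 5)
Your proof is correct and takes essentially the same route as the paper: the paper gives no separate argument for this theorem, treating it exactly as you do in Step 2, namely as an immediate substitution of the preceding proposition $N_{e_h}=n_{e_h}={\rm card}(\mathscr{C}_h/{\equiv})$ into Brenner's Theorem \ref{Brenner1}, so that the middle term has $N_{e_h}={\rm card}(\mathscr{C}_h/{\equiv})$ indecomposable summands of which $N_{e_h}-n_{e_h}=0$ are projective. Your Step 1 (noninjectivity of $S_h$, via self-injectivity of the Hochschild extension algebra $T_\alpha(A)$ and the observation that $e_h\,{\rm rad}\,T_\alpha(A)\supseteq e_hD(A)\neq 0$ so no indecomposable projective is simple) is a hypothesis check the paper leaves implicit, and it is sound.
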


\begin{theorem}
	Let $P_h$ be the indecomposable projective $T_\alpha(A)$-module
    corresponding to the vertex $h$.
    Then the number of indecomposable direct summands of
    ${\rm rad}\,P_h/{\rm soc}\,P_h$ is equal to the number of equivalence classes
    in $\mathscr{C}_h$.
\end{theorem}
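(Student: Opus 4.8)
The plan is to derive the statement directly from Brenner's Corollary together with the preceding proposition; the one structural input required is that $T_\alpha(A)$ is self-injective. First I would recall the standard fact (see \cite{yamagata 1981} and \cite{Handbook of algebra}) that every Hochschild extension algebra of $A$ by the standard duality module $D(A)$ is a Frobenius algebra, hence self-injective. Concretely, the linear functional $\lambda \colon T_\alpha(A) \to K$ given by $\lambda(a,\,f) = f(1_A)$ induces the associative bilinear form $\langle (a,\,f),\,(b,\,g)\rangle = \lambda\bigl((a,\,f)(b,\,g)\bigr) = f(b) + g(a) + \alpha(a,\,b)(1_A)$, and this form is nondegenerate because the canonical pairing between $A$ and $D(A)$ already is. Thus $T_\alpha(A)$ satisfies the hypothesis of Brenner's Corollary.

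Granting self-injectivity, Brenner's Corollary applied to the primitive idempotent $e_h$ and the indecomposable projective module $P_h$ shows at once that the number of indecomposable direct summands of ${\rm rad}\,P_h/{\rm soc}\,P_h$ equals $n_{e_h}$. It then remains only to interpret $n_{e_h}$ combinatorially, and this is exactly what the preceding proposition supplies, where we established the chain of equalities $N_{e_h} = n_{e_h} = {\rm card}(\mathscr{C}_h/{\equiv})$. Combining the two facts yields that the number of indecomposable direct summands of ${\rm rad}\,P_h/{\rm soc}\,P_h$ equals ${\rm card}(\mathscr{C}_h/{\equiv})$, the number of equivalence classes in $\mathscr{C}_h$, as claimed.

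Since the combinatorial analysis was already completed in the earlier sections, the only point demanding attention is checking that twisting by the $2$-cocycle $\alpha$ does not spoil the Frobenius structure. This is harmless: because $D(A)^2 = 0$ in $T_\alpha(A)$ and $\alpha$ affects only products landing in $D(A)$, the value $\alpha(a,\,b)(1_A)$ is a lower-order perturbation that leaves the nondegeneracy of the form above intact, so the self-injectivity hypothesis of Brenner's Corollary genuinely applies. With this verification recorded, the theorem follows in one line from the two cited results.
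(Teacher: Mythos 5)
Your proof is correct and follows essentially the same route the paper intends: the theorem is stated there as an immediate consequence of Brenner's Corollary (which requires self-injectivity) combined with the preceding proposition $N_{e_h} = n_{e_h} = {\rm card}(\mathscr{C}_h/{\equiv})$. Your explicit verification that $T_\alpha(A)$ is Frobenius, hence self-injective, via the functional $\lambda(a,\,f) = f(1_A)$ is a point the paper leaves implicit (citing \cite{yamagata 1981} and \cite{Handbook of algebra}), but it is the same argument.
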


\begin{corollary}
	Let $n \geq 3$ and $h \in \Delta_0$ be neither sink nor source in $\Delta$.
	Then we have ${\rm card}(\mathscr{C}_h/{\equiv}) = 1$.
\end{corollary}
\begin{proof}
By Proposition \ref{Cor4.8},
we have
$
	{\rm card}(\mathscr{C}_h/{\equiv}) = {\rm card}(\mathscr{A}_h/{\approx})
$.
So we show ${\rm card}(\mathscr{A}_h/{\approx})=1$.
Let 
$
	\mathscr{A}_h = \{ x_1,\, x_2, \ldots, x_m,\, y_1,\, y_2, \ldots, y_l \}
$,
where
$x_1, \ldots, x_m \in \Delta_0$
and
$y_1, \ldots, y_l \in (\Delta_{T_\alpha(A)})_0 \backslash \Delta_0$.
Now $h$ is not sink in $\Delta$,
so there exists an arrow $w \in \Delta_1$ such that $s(w)=h$.
By the assumption that $A=K\Delta/R_{\Delta}^n$ and $n \geq 3$,
we have $x_i w \neq 0$ in $A$ for any $1 \leq i \leq m$.
So $x_i w \neq 0$ in $T_\alpha(A)$ and 
$$
	x_1 \approx x_2 \approx \cdots \approx x_m 
$$
in $\mathscr{A}_h$.

Moreover, for any $j \in \{ 1,\ldots,l \}$,
by Proposition \ref{path is in a nonzero oriented cycle},
$y_j$ is contained in the $\alpha$-revived cycle.
Then we have $y_j v \neq 0$ in $T_\alpha(A)$ for some $v \in \Delta_1$.
And, by the same reason as above,
$x_1 v \neq 0$
in $T_\alpha(A)$.
So we have $x_1 \approx y_j$ in $\mathscr{A}_h$.
Hence we have ${\rm card}(\mathscr{C}_h/{\equiv}) = 1$.
\end{proof}

\begin{example}
Let $T_\alpha(A)$ be the same as in Example \ref{example1}.
Then, elements of $\mathscr{C}_1$ are elementary cycles and $\alpha$-revived
cycles in Example \ref{example1}. We have $\rm{card}( \mathscr{C}_1/ {\equiv}) = 1$.
On the other hand, $\mathscr{A}_1 = \{x_4,\, z_3,\, y_{z_1z_2},\, y_{x_1z_2} \}$. 
We have
\begin{align*}
	& x_4z_1 \neq 0 ,\quad z_3z_1\neq 0 ,\quad y_{z_1z_2} z_1 \neq 0, \\
 	& x_4x_1 \neq 0 ,\quad y_{x_1z_2} x_1 \neq 0,
\end{align*}
in $T_\alpha(A)$.
So all elements in $\mathscr{A}_1$ are equivalent, and 
${\rm card}(\mathscr{A}_h/ {\approx}) = 1$.
So, for almost split sequence
$$
	0 \longrightarrow S_1 \longrightarrow E \longrightarrow \tau^{-1}S_1 \longrightarrow 0,
$$
$E$ is a non-projective indecomposable module by Theorem \ref{cor1}.
\end{example}

\begin{example}
Let $\Delta$ be the following quiver:
$$
{\unitlength 0.1in%
\begin{picture}(22.8100,13.9300)(16.1000,-17.2300)%
%
\special{pn 8}%
\special{pa 1911 585}%
\special{pa 2719 585}%
\special{fp}%
\special{sh 1}%
\special{pa 2719 585}%
\special{pa 2652 565}%
\special{pa 2666 585}%
\special{pa 2652 605}%
\special{pa 2719 585}%
\special{fp}%
%
\special{pn 8}%
\special{pa 3831 1594}%
\special{pa 3831 786}%
\special{fp}%
\special{sh 1}%
\special{pa 3831 786}%
\special{pa 3811 853}%
\special{pa 3831 839}%
\special{pa 3851 853}%
\special{pa 3831 786}%
\special{fp}%
%
\special{pn 8}%
\special{pa 2922 1696}%
\special{pa 3730 1696}%
\special{fp}%
\special{sh 1}%
\special{pa 3730 1696}%
\special{pa 3663 1676}%
\special{pa 3677 1696}%
\special{pa 3663 1716}%
\special{pa 3730 1696}%
\special{fp}%
%
\special{pn 8}%
\special{pa 3730 685}%
\special{pa 2922 685}%
\special{fp}%
\special{sh 1}%
\special{pa 2922 685}%
\special{pa 2989 705}%
\special{pa 2975 685}%
\special{pa 2989 665}%
\special{pa 2922 685}%
\special{fp}%
%
\special{pn 8}%
\special{pa 2861 786}%
\special{pa 2861 1594}%
\special{fp}%
\special{sh 1}%
\special{pa 2861 1594}%
\special{pa 2881 1527}%
\special{pa 2861 1541}%
\special{pa 2841 1527}%
\special{pa 2861 1594}%
\special{fp}%
\put(32.6500,-6.3400){\makebox(0,0)[lb]{$x_4$}}%
\put(38.9100,-11.9000){\makebox(0,0)[lb]{$x_3$}}%
\put(32.6500,-17.9700){\makebox(0,0)[lb]{$x_2$}}%
\put(29.2200,-11.9000){\makebox(0,0)[lb]{$x_1$}}%
\put(21.8500,-5.4400){\makebox(0,0)[lb]{$z_1$}}%
\put(28.0000,-7.1000){\makebox(0,0)[lb]{$1$}}%
\put(28.0000,-16.9600){\makebox(0,0)[lb]{$2$}}%
\put(16.1000,-7.5000){\makebox(0,0)[lb]{$5$}}%
\put(37.9500,-7.1600){\makebox(0,0)[lb]{$4$}}%
\put(38.0000,-17.2600){\makebox(0,0)[lb]{$3$}}%
%
\special{pn 8}%
\special{pa 1910 780}%
\special{pa 2718 780}%
\special{fp}%
\special{sh 1}%
\special{pa 2718 780}%
\special{pa 2651 760}%
\special{pa 2665 780}%
\special{pa 2651 800}%
\special{pa 2718 780}%
\special{fp}%
\put(21.7000,-9.5000){\makebox(0,0)[lb]{$z_m$}}%
\put(23.4000,-7.7000){\rotatebox{90.0000}{\makebox(0,0)[lb]{$\cdots$}}}%
\end{picture}}%

$$
and $A= K\Delta/R_\Delta^3$.
We put $\gamma = x_1x_2x_3x_4$.
Then, for each $i$ $(1 \leq i \leq 4)$,
$
\alpha_i : A \times A \longrightarrow D(A)
$ corresponding to $\gamma$
is the map as follows:
$$
	\alpha_i(\overline{a},\,\overline{b}) 
    	= \begin{cases}
    		\overline{x_{i+3}}^*  & \text{if $\overline{a}, \overline{b} \neq 0$ in $A$ and $ab = x_ix_{i+1}x_{i+2}$}, \\
            \overline{e_{i+4}}^*  & \text{if $\overline{a}, \overline{b} \neq 0$ in $A$ and $ab = x_ix_{i+1}x_{i+2}x_{i+3}$}, \\
            0 & \text{otherwise},
    	\end{cases}
$$
where the definition of $\alpha_i$'s is given in Section 2.
Let $k(\neq 0) \in K$ and $\alpha = k \sum_{i=1}^4 \alpha_i$.
Then, the ordinary quiver of $T_\alpha(A)$ coincides with $\Delta_{T_0(A)}$,
and $\Delta_{T_0(A)}$ is the following quiver:

$$
{\unitlength 0.1in%
\begin{picture}(42.1500,24.8500)(17.4000,-29.6500)%
%
\special{pn 8}%
\special{pa 2153 981}%
\special{pa 3574 981}%
\special{fp}%
\special{sh 1}%
\special{pa 3574 981}%
\special{pa 3507 961}%
\special{pa 3521 981}%
\special{pa 3507 1001}%
\special{pa 3574 981}%
\special{fp}%
%
\special{pn 8}%
\special{pa 5836 2690}%
\special{pa 5836 1080}%
\special{fp}%
\special{sh 1}%
\special{pa 5836 1080}%
\special{pa 5816 1147}%
\special{pa 5836 1133}%
\special{pa 5856 1147}%
\special{pa 5836 1080}%
\special{fp}%
%
\special{pn 8}%
\special{pa 4040 2830}%
\special{pa 5651 2830}%
\special{fp}%
\special{sh 1}%
\special{pa 5651 2830}%
\special{pa 5584 2810}%
\special{pa 5598 2830}%
\special{pa 5584 2850}%
\special{pa 5651 2830}%
\special{fp}%
%
\special{pn 8}%
\special{pa 5634 878}%
\special{pa 4023 878}%
\special{fp}%
\special{sh 1}%
\special{pa 4023 878}%
\special{pa 4090 898}%
\special{pa 4076 878}%
\special{pa 4090 858}%
\special{pa 4023 878}%
\special{fp}%
\put(47.0600,-7.7600){\makebox(0,0)[lb]{$x_4$}}%
\put(59.5500,-18.8400){\makebox(0,0)[lb]{$x_3$}}%
\put(47.0600,-30.9500){\makebox(0,0)[lb]{$x_2$}}%
\put(38.2000,-19.6000){\makebox(0,0)[lb]{$x_1$}}%
\put(27.5000,-6.1000){\makebox(0,0)[lb]{$z_1$}}%
\put(37.8000,-9.2700){\makebox(0,0)[lb]{$1$}}%
\put(37.8000,-28.9400){\makebox(0,0)[lb]{$2$}}%
\put(17.4000,-9.9000){\makebox(0,0)[lb]{$5$}}%
\put(57.6300,-9.3900){\makebox(0,0)[lb]{$4$}}%
\put(57.7300,-29.5400){\makebox(0,0)[lb]{$3$}}%
%
\special{pn 8}%
\special{pa 3530 2445}%
\special{pa 2286 1202}%
\special{fp}%
\special{sh 1}%
\special{pa 2286 1202}%
\special{pa 2319 1263}%
\special{pa 2324 1240}%
\special{pa 2347 1235}%
\special{pa 2286 1202}%
\special{fp}%
%
\special{pn 8}%
\special{pa 3271 2673}%
\special{pa 2028 1430}%
\special{fp}%
\special{sh 1}%
\special{pa 2028 1430}%
\special{pa 2061 1491}%
\special{pa 2066 1468}%
\special{pa 2089 1463}%
\special{pa 2028 1430}%
\special{fp}%
%
\special{pn 8}%
\special{pa 2150 660}%
\special{pa 3571 660}%
\special{fp}%
\special{sh 1}%
\special{pa 3571 660}%
\special{pa 3504 640}%
\special{pa 3518 660}%
\special{pa 3504 680}%
\special{pa 3571 660}%
\special{fp}%
%
\special{pn 8}%
\special{pa 3819 1141}%
\special{pa 3819 2562}%
\special{fp}%
\special{sh 1}%
\special{pa 3819 2562}%
\special{pa 3839 2495}%
\special{pa 3819 2509}%
\special{pa 3799 2495}%
\special{pa 3819 2562}%
\special{fp}%
%
\special{pn 8}%
\special{pa 4188 2218}%
\special{pa 5132 1118}%
\special{fp}%
\special{sh 1}%
\special{pa 5132 1118}%
\special{pa 5073 1156}%
\special{pa 5097 1158}%
\special{pa 5104 1182}%
\special{pa 5132 1118}%
\special{fp}%
%
\special{pn 8}%
\special{pa 5472 1485}%
\special{pa 4529 2585}%
\special{fp}%
\special{sh 1}%
\special{pa 4529 2585}%
\special{pa 4588 2547}%
\special{pa 4564 2545}%
\special{pa 4557 2521}%
\special{pa 4529 2585}%
\special{fp}%
%
\special{pn 8}%
\special{pa 5235 2562}%
\special{pa 4249 1501}%
\special{fp}%
\special{sh 1}%
\special{pa 4249 1501}%
\special{pa 4280 1563}%
\special{pa 4285 1540}%
\special{pa 4309 1536}%
\special{pa 4249 1501}%
\special{fp}%
%
\special{pn 8}%
\special{pa 4651 1204}%
\special{pa 5639 2265}%
\special{fp}%
\special{sh 1}%
\special{pa 5639 2265}%
\special{pa 5608 2203}%
\special{pa 5603 2226}%
\special{pa 5579 2230}%
\special{pa 5639 2265}%
\special{fp}%
\put(24.8000,-21.8000){\rotatebox{-45.0000}{\makebox(0,0)[lb]{$y_{z_mx_1}$}}}%
\put(29.3000,-17.7000){\rotatebox{-45.0000}{\makebox(0,0)[lb]{$y_{z_1x_1}$}}}%
\put(49.5000,-12.4000){\rotatebox{52.3264}{\makebox(0,0)[lb]{$y_{x_4x_1}$}}}%
\put(45.6000,-24.9000){\rotatebox{49.4609}{\makebox(0,0)[lb]{$y_{x_2x_3}$}}}%
\put(43.1500,-15.6000){\rotatebox{-45.8069}{\makebox(0,0)[lb]{$y_{x_1x_2}$}}}%
\put(53.9200,-19.7100){\rotatebox{-44.1815}{\makebox(0,0)[lb]{$y_{x_3x_4}$}}}%
\put(27.5000,-11.4000){\makebox(0,0)[lb]{$z_m$}}%
\put(29.5000,-9.3000){\rotatebox{90.0000}{\makebox(0,0)[lb]{$\cdots$}}}%
\put(27.4200,-20.4200){\rotatebox{45.0000}{\makebox(0,0)[lb]{$\cdots$}}}%
\end{picture}}%

$$
~\\
In this case,
elementary cycles with origin $5$ are 
\begin{align*}
	&z_1x_1y_{z_1x_1},\, z_2x_1y_{z_2x_1},\ldots, z_mx_1y_{z_mx_1}.
\end{align*}
Moreover, there is no $\alpha$-revived cycle with origin $5$.
In this case,
if $i \neq j$ then
$z_ix_1y_{z_ix_1}$ and $z'_jx_1y_{z'_jx_1}$ are not equivalent in 
$\mathscr{C}_5$. And we have 
${\rm card}( \mathscr{C}_5/ {\equiv}) = m$.
\end{example}

\section*{Acknowledgment}
The author is grateful to Professor Katsunori Sanada and 
Doctor Tomohiro Itagaki
for many helpful comments and suggestions on improving the clarity of the article.

%



\begin{thebibliography}{30}
\bibitem{ASS} I. Assem, D. Simson, A. Skowro$\acute{{\rm n}}$ski,
   {\it Elements of the Representation Theory of Associative Algebras I},
   London Math. Soc. Student Texts, vol. 65. Cambridge:
   Cambridge Univ. Press, 2006.
\bibitem{Brenner}
	S. Brenner,
	The almost split sequence starting with a simple module,
	Arch. Math. {\bf 62} (1994) 203--206. 
\bibitem{FP}
	E. Fern$\acute{{\rm a}}$ndez, M. Platzeck,
   Presentations of trivial extensions of finite dimensional algebras and a theorem
   of Sheila Brenner, J. Algebra {\bf 249} (2002) 326--344.
\bibitem{Hochschild}G. Hochschild,
 	On the cohomology groups of an associative algebra,
    Ann. Math. (2) {\bf 46} (1945) 58--67.
\bibitem{Koie} 
 H. Koie, T. Itagaki, K. Sanada,
 The ordinary quivers of Hochschild extension algebras
for self-injective Nakayama algebras, 
 Communications in Algebra, {\bf 46} (2018) No.9, 3950--3964.
\bibitem{Koie2} 
 H. Koie, T. Itagaki, K. Sanada,
 On presentations of Hochschild extension algebras for a class of
 self-injective Nakayama algebras, 
 SUT Journal of Mathematics {\bf 53} (2017) No.2, 135--148.
\bibitem{skoldberg}
 	E. Sk\"oldberg,
    The Hochschild homology of truncated and quadratic monomial algebra,
    J. Lond. Math. Soc. (2) 59 (1999) 76--86.
\bibitem{Fro} A. Skowro$\acute{{\rm n}}$ski, K. Yamagata,
 	{\it Frobenius Algebras I: Basic Representation Theory,}
    EMS Textbooks Math. Z\"{u}rich: Eur. Math. Soc. 2011.
 \bibitem{yamagata 1981}
   K. Yamagata,
    Extension over hereditary artinian rings with self-dualites, I,
    J. algebra {\bf 73} (1981) 386--433.
\bibitem{Handbook of algebra}
 	K. Yamagata,
 	Frobenius Algebras,
    {\it Handbook of Algebra} 1: 841--887, Elsevier/ North-Holland, Amsterdam, 1996.
\end{thebibliography}
\end{document}